\documentclass[a4paper]{amsart} 
     
\usepackage{amssymb} 
\usepackage{newtxtext, newtxmath}
\usepackage{color}  
\usepackage[normalem]{ulem}
\usepackage[driverfallback=dvipdfm]{hyperref}

\usepackage{enumitem}             

\theoremstyle{plain}
\newtheorem*{thm}{Main Theorem}
\newtheorem{lem}{Lemma}[section]

\theoremstyle{definition} 
\newtheorem{defn}{Definition}

\theoremstyle{remark}

\newtheorem*{nota}{Notation}

\numberwithin{equation}{section}

\newcommand{\N}{\mathbb{N}}

\newcommand{\Czero}[1]{C_0(#1)}
\newcommand{\posC}[1]{C_0^+(#1)} 
\newcommand{\F}{\mathcal{F}} 

\DeclareMathOperator{\supp}{supp}
\DeclareMathOperator{\coz}{coz}

 \usepackage{marginnote}
 \usepackage{geometry}

\begin{document}

\title[Norm additive mappings]
{Norm additive mappings between the positive cones of continuous function algebras}

\author[N.~Shibata]{Natsumi Shibata}
\address{Graduate School of Science and Technology, Niigata University, Niigata 950-2181, Japan}
\email{f25a056h@mail.cc.niigata-u.ac.jp}
\author[T.~Miura]{Takeshi Miura}
\address{Department of Mathematics,
Faculty of Science, Niigata University,
Niigata 950-2181, Japan}
\email{miura@math.sc.niigata-u.ac.jp}

\subjclass[2020]{Primary 47B48, 46J10, 47B33, 39B52}
\keywords{continuous function algebra, disjointness preserving map,
norm additive mapping, positive cone, weighted composition operator}

\begin{abstract}
We study bijections between the positive cones of spaces of continuous functions vanishing at infinity that satisfy a norm additive condition. 
Such maps arise naturally in the study of nonlinear functional equations and norm-preserving structures on function spaces.

While in the compact (unital) case these maps can often be analyzed via linear extension techniques, the non-unital setting $\Czero{X}$ requires a different approach due to the absence of a distinguished unit element. 

In this paper, we show that every bijection $T:\posC{X}\to\posC{Y}$
between the positive cones of $\Czero{X}$ and $\Czero{Y}$
satisfying 
\[
\|T(f+g)\|=\|Tf+Tg\|
\]
for all $f,g\in\posC{X}$ admits a representation of the form
\[
Tf(y)=h(y)f(\tau(y)),
\]
where $\tau:Y\to X$ is a homeomorphism and
$h$ is a bounded continuous function from $Y$ to $(0,\infty)$.

This yields a complete characterization of norm additive bijections
on positive cones of $\Czero{X}$.
\end{abstract}

\maketitle

\section{Introduction}

Let $X$ be a locally compact Hausdorff space, and let $\Czero{X}$ denote the Banach space of continuous real-valued functions vanishing at infinity, equipped with the supremum norm
$\|f\|=\sup_{x\in X}|f(x)|$ for $f\in\Czero{X}$.
We denote by $\posC{X}$ the positive cone of $\Czero{X}$.
That is,
\[
\posC{X}=\{f\in\Czero{X}:f(x)\geq0\quad\mbox{for all $x\in X$}\}.
\]
In this paper, we study nonlinear bijections on $\posC{X}$ preserving a norm additive structure.

Mappings between function spaces preserving norm-type structures have been studied extensively in functional analysis. 
In particular, nonlinear transformations satisfying additive-type conditions on positive cones arise in the study of isometric structures and nonlinear variants of classical functional equations
(see e.g., \cite{Aczel,FischerMuszely}).

A typical example is the norm additive condition
\[
\|T(f+g)\|=\|Tf+Tg\|
\qquad
(f,g\in\posC{X}).
\]
Such mappings have been investigated in several contexts. 
Among related works, Moln\'{a}r \cite{Molnar} investigated nonlinear maps preserving various norm-type structures, while Hirota \cite{Hirota} studied norm additive
surjections on positive cones of function spaces and established their additivity and positive homogeneity.

In the case where $X$ is compact, such maps can often be described using linear extension techniques. 
However, the situation becomes more delicate for the non-unital space $\Czero{X}$. 
Since $\Czero{X}$ does not contain a constant
unit function, these techniques are no longer directly applicable, 
and one cannot reduce the problem to the study of linear operators in a straightforward manner.

The aim of this paper is to give a complete description of bijections
\[
T\colon\posC{X}\to\posC{Y}
\]
satisfying the norm additive condition in this non-unital setting.

We now state the main result of this paper, which gives a complete description of such norm additive bijections.

\begin{thm}
Let $X$ and $Y$ be locally compact Hausdorff spaces,
and let $T \colon \posC{X} \to \posC{Y}$ be a bijection satisfying
\[
    \|T(f+g)\| = \|Tf + Tg\|
\]
for all $f, g \in \posC{X}$.
Then there exist a homeomorphism $\tau \colon Y \to X$ and a bounded continuous function $h \colon Y \to (0, \infty)$ bounded away from zero such that
\[
    Tf(y) = h(y)f(\tau(y))
\]
for all $f \in \posC{X}$ and $y \in Y$.

Conversely, any mapping $T$ of this form is a bijection from $\posC{X}$ onto $\posC{Y}$ satisfying the norm additive condition as above.
\end{thm}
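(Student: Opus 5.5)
We first dispose of the converse. If $Tf=h\cdot(f\circ\tau)$ with $\tau$ a homeomorphism and $0<c\le h\le C$, then $Tf\in\posC{Y}$, and $T$ is additive. Its inverse is $g\mapsto (g/h)\circ\tau^{-1}$, which also maps $\posC{Y}$ into $\posC{X}$. Hence $T$ is a bijection, and the norm condition holds trivially because $T(f+g)=Tf+Tg$.

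For the direct statement, the plan has four stages. First we show that $T$ preserves norms up to order data and is additive. Then we extract disjointness preservation, build $\tau$ from supports, and finally identify the weight $h$.

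\emph{Step 1: basic norm identities.} Taking $g=f$ gives $\|T(2f)\|=2\|Tf\|$. Iterating with general sums, the norm additive condition yields $\|T(\sum f_i)\|=\|\sum Tf_i\|$ by induction, since $\|T(f+g+k)\|=\|T(f+g)+Tk\|$. We also get $T0=0$, because $\|T0\|=\|T0+T0\|=2\|T0\|$.

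The key tool will be the following order characterization. For $f,g\in\posC{X}$ we have $\|f+g\|=\max(\|f\|,\|g\|)$ for all scalings $f\mapsto\lambda f$, $\lambda>0$, precisely when $f$ and $g$ have disjoint cozero sets; if $\coz f\cap\coz g\neq\emptyset$, one can rescale so that $\|\lambda f+g\|>\max$. We want to transfer this property through $T$. Following Hirota's strategy, we aim to show that $T$ is additive and positively homogeneous. To do this we prove $\|Tf+Tk\|=\|Tg+Tk\|$ for all $k$ whenever $T f$ and $Tg$ "behave the same", using the bijectivity of $T$. The intended argument is this: for fixed $u=T(f+g)$ and $v=Tf+Tg$, the identity $\|u+Tk\|=\|T(f+g+k)\|=\|Tf+T(g+k)\|$ must be compared with $\|v+Tk\|$. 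We then invoke the lemma that in $\posC{Y}$, if $\|u+w\|=\|v+w\|$ for all $w\in\posC{Y}$, then $u=v$. To prove that lemma, take $w$ a peak-like function at a point $y_0$ with height $M-\varepsilon$, where $u(y_0)\ne v(y_0)$; Urysohn's lemma for locally compact spaces provides such $w$. The obstacle is that $\|T(f+g+k)\|$ equals $\|T(f+g)+Tk\|$, whereas $\|Tf+Tg+Tk\|$ only equals $\|T(f+g+k)\|$ through a multi-term identity that needs additivity itself. I expect this to be the main difficulty. I would first try to establish the three-term identity $\|T(f+g+k)\|=\|Tf+Tg+Tk\|$ directly. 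Failing that, I would prove additivity only on disjoint pairs, which suffices for the representation.

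\emph{Step 2: disjointness preservation.} Once $T$ is additive and positively homogeneous, we get $\|T(\lambda f+g)\|=\|\lambda Tf+Tg\|$. By the characterization above, $fg=0$ holds iff $Tf\cdot Tg=0$, so $T$ and $T^{-1}$ both preserve disjointness. Moreover, $T$ is monotone: if $f\le g$ then $Tg=Tf+T(g-f)\ge Tf$. By additivity, $T$ extends to a positive linear bijection of $\Czero{X}$ onto $\Czero{Y}$ whose inverse is also positive, that is, an order isomorphism.

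\emph{Step 3: representation.} For an order isomorphism (equivalently, a bi-disjointness preserving positive linear bijection) of $\Czero$ spaces, the standard argument runs as follows. For $y\in Y$, define $\tau(y)$ as the unique $x$ such that $Tf(y)=0$ whenever $f$ vanishes on a neighborhood of $x$. Existence follows by a compactness/partition of unity argument on supports, using that $\delta_y\circ T$ is a nonzero positive functional, hence a positive Radon measure. Disjointness preservation forces this measure to be supported at a single point, so $Tf(y)=h(y)f(\tau(y))$ with $h(y)=\|\delta_y\circ T\|>0$; here $h(y)>0$ because $T$ is surjective. Continuity of $h$ and $\tau$ follows from choosing $f\equiv1$ near a point. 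Applying the same argument to $T^{-1}$ shows that $\tau$ is a homeomorphism. Boundedness of $h$ follows from the boundedness of positive operators, and the lower bound $h\ge 1/\|T^{-1}\|$ follows from applying the same reasoning to $T^{-1}$.
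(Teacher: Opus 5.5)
\textbf{Gap in Step 1: additivity is never proved.} The direct implication rests on the claim that $T$ is additive and positively homogeneous, and you do not establish it. Your induction asserting $\|T(\sum f_i)\|=\|\sum Tf_i\|$ is unjustified. Passing from $\|T(f+g)+Tk\|$ to $\|Tf+Tg+Tk\|$ is precisely the missing ingredient. Your separation lemma (if $\|u+w\|=\|v+w\|$ for all $w\in\posC{Y}$ then $u=v$) is true, but for the same reason it cannot be applied to $u=T(f+g)$, $v=Tf+Tg$, as you acknowledge. Everything in Steps~2 and~3 depends on this: monotonicity, the linear extension, and the positive Radon measures $\delta_y\circ T$.

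The paper does not derive this fact either. It imports it as Hirota's theorem (Lemma~\ref{lem:hirota_reduction}), so you must either cite that result or supply its proof. Your fallback does not work, because additivity on disjoint pairs does not suffice for the representation. For instance, $Tf=f^2$ is a bijection of $\posC{X}$ onto itself with $T(f+g)=Tf+Tg$ whenever $fg=0$. Yet it is not of the form $h\cdot(f\circ\tau)$. Excluding such maps requires the full norm condition, which is exactly the content of Hirota's theorem.

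\textbf{Error in Step 2: the ``key tool'' is false.} On $X=\mathbb{R}$ take $f(x)=\max\{0,1-|x|\}$ and $g(x)=\max\{0,1-|x-3/2|\}$. On the overlap $(1/2,1)$ of the cozero sets one has $f+g=1/2$, so $\lambda f+g\le\frac12\max\{\lambda,1\}$ there. Hence $\|\lambda f+g\|=\max\{\lambda\|f\|,\|g\|\}$ for every $\lambda>0$, although $fg\ne0$. So disjointness preservation cannot be deduced from that characterization. Once additivity is available, it follows from order instead, as in the paper: $T$ and $T^{-1}$ are monotone, so $v=\min\{Tf,Tg\}$ satisfies $0\le T^{-1}v\le\min\{f,g\}$, forcing $v=0$ when $fg=0$.

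\textbf{Step 3 is a valid alternative once these are fixed.} With these two points repaired, your Step~3 is a legitimate route different from the paper's. You extend $T$ to a positive linear bijection of $\Czero{X}$ onto $\Czero{Y}$ and get boundedness for free from positivity. You then use disjointness to collapse the Riesz measure of $\delta_y\circ T$ to a point mass. The paper avoids both the linear extension and measure theory. It proves boundedness by a completeness argument and defines $\tau(y)$ as the unique point of $\bigcap_{f\in\mathcal F_y}\supp(f)$, using a compactly supported member of $\mathcal F_y$.
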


The proof is based on a detailed analysis of the algebraic and order structure induced by the norm condition. 
We first derive structural properties of $T$, then analyze its interaction with supports of functions, which leads to the construction of the underlying homeomorphism, 
and finally derive the weighted composition representation of $T$.

\section{Preliminaries and structural properties}

In this section, we derive several preliminary and structural properties of norm additive bijections on $\posC{X}$. 
Our purpose is to establish the basic algebraic and support behavior of such maps, which will be essential for constructing the underlying homeomorphism in the subsequent sections.

We first fix some notation concerning supports of functions.

\begin{nota}
For a function $f\in\posC{X}$, we write
\[
    \coz(f)=\{x\in X:f(x)\ne 0\}, \qquad
    \supp(f)=\overline{\coz(f)},
\]
where $\overline{\cdot}$ denotes the closure.
Thus $x\notin \supp(f)$ if and only if $f$ vanishes on some open neighborhood of $x$.
\end{nota}

Throughout this section, $X$ and $Y$ are locally compact Hausdorff spaces, and
\[
T\colon \posC{X}\to \posC{Y}
\]
is a bijection satisfying
\[
\|T(f+g)\|=\|Tf+Tg\|
\qquad (f,g\in\posC{X}).
\]

We begin by recalling Hirota's theorem, which shows that the norm additive condition already forces $T$ to be additive and positively homogeneous on $\posC{X}$.

\begin{lem}[{Hirota \cite[Theorem 1.1]{Hirota}}]\label{lem:hirota_reduction}
Let $T \colon \posC{X} \to \posC{Y}$ be a surjective mapping satisfying
\[
    \|T(f+g)\| = \|Tf + Tg\|
\]
for all $f, g \in \posC{X}$. Then $T$ is additive and positively homogeneous;
in particular, 
\[
T(rf)=rTf
\]
for all $r\geq0$ and $f\in\posC{X}$.
\end{lem}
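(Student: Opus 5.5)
The plan is to reduce additivity of $T$ to a statement about norms only, using a separation principle in $\posC{Y}$, and then obtain positive homogeneity from additivity.

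\emph{Preliminary identities.} Putting $f=g=0$ in the norm condition gives $\|T0\|=2\|T0\|$, so $T0=0$. Putting $g=f$ gives $\|T(2f)\|=2\|Tf\|$, and by induction $\|T(nf)\|=n\|Tf\|$.

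\emph{Separation principle.} For $u,v\in\posC{Y}$, if $\|u+k\|=\|v+k\|$ for all $k\in\posC{Y}$, then $u=v$. Indeed, suppose $u(y_0)>v(y_0)$ and choose $M>\|u\|+\|v\|$. Take an open $U\ni y_0$ on which $v<u(y_0)-\varepsilon$. By Urysohn's lemma, take $k\in\posC{Y}$ supported in $U$ with $0\le k\le M-u(y_0)$ and $k(y_0)=M-u(y_0)$. Then $\|u+k\|\ge M$, while $\|v+k\|<M$. More precisely, the same bump construction gives the pointwise formula
\[
u(y_0)=\inf\{\|u+k\|-M:\ k\in\posC{Y},\ k(y_0)=M\}\qquad (M\ge\|u\|).
\]
By surjectivity every such $k$ has the form $Tl$, so $u(y_0)$ can be computed from the numbers $\|u+Tl\|$.

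\emph{Additivity.} By the separation principle, $T(f+g)=Tf+Tg$ will follow once we show
\[
\|T(f+g)+Tl\|=\|Tf+Tg+Tl\|\qquad\text{for all } l\in\posC{X}.
\]
The left side equals $\|T(f+g+l)\|$, and by the norm condition applied to different groupings it also equals $\|Tf+T(g+l)\|$ and $\|Tg+T(f+l)\|$. So the problem reduces to a pointwise additivity statement at each $y_0$: namely $T(g+l)(y_0)=Tg(y_0)+Tl(y_0)$, at least for $l$ with $Tl$ peaking at $y_0$.

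For this I would first prove the inequality $T(g+l)(y_0)\ge Tg(y_0)+Tl(y_0)$. The infimum formula gives
\[
\|Tf+T(g+l)\|\ge Tf(y_0)+T(g+l)(y_0),
\]
and I would combine it with the symmetric groupings and take infima over bumps $Tl$ at $y_0$. Iterating over the three groupings should squeeze out the reverse inequality. Once $T(f+g)(y_0)$ and $Tf(y_0)+Tg(y_0)$ are shown equal for every $y_0$, additivity follows.

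\emph{Positive homogeneity.} Additivity gives $T(qf)=qTf$ for rational $q\ge0$. It also makes $T$ monotone: if $f\le g$ then $Tg=Tf+T(g-f)\ge Tf$. Approximating a real $r\ge0$ from above and below by rationals and using monotonicity then gives $T(rf)=rTf$.

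The main obstacle is the pointwise additivity step. The norm condition only controls sums of two images, whereas the separation argument needs three-term sums $Tf+Tg+Tl$. Bridging this requires carefully choosing peaking functions $Tl$ via surjectivity so that the suprema are attained only near $y_0$.
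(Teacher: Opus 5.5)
\textbf{There is a genuine gap in the additivity step.} Your preliminary identities, the separation principle with its infimum formula, and the final passage from additivity to positive homogeneity are fine. (The induction for $\|T(nf)\|=n\|Tf\|$ is unjustified beyond powers of $2$, but you never use it.) The gap is additivity itself, which is the whole content of Hirota's theorem; the paper only cites it.

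Your reduction is circular. By the separation principle and surjectivity, the identity $\|Tf+T(g+l)\|=\|Tf+Tg+Tl\|$ for all $f$ is \emph{equivalent} to $T(g+l)=Tg+Tl$. So the three-term identity you need is additivity again. The proposed way out contains no mechanism:
\begin{itemize}
\item the inequality $\|Tf+T(g+l)\|\ge Tf(y_0)+T(g+l)(y_0)$ bounds $T(g+l)(y_0)$ from above, not below;
\item the hypothesis only yields $\|T(g+l)\|=\|Tg+Tl\|$, which says nothing about \emph{where} $T(g+l)$ is large;
\item ``iterating over the three groupings'' never produces that information.
\end{itemize}

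\textbf{The missing idea is a localization estimate.} Your own tools give it once you add a second, disjoint bump. Suppose $y\notin\supp(Tl)$. By Urysohn and surjectivity, choose $m$ with $Tm(y)=\|Tm\|\ge\|Tl\|$ and $Tm\cdot Tl=0$. Then
\[
T(g+l)(y)+\|Tm\|\le\|T(g+l)+Tm\|=\|Tg+T(l+m)\|\le\|Tg\|+\|Tl+Tm\|=\|Tg\|+\|Tm\|,
\]
so $T(g+l)\le\|Tg\|$ off $\supp(Tl)$.

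Now take $Tl$ peaking at $y_0$, with $\supp(Tl)$ inside a neighbourhood of $y_0$ on which $Tf$, $Tg$, $T(f+g)$ are within $\epsilon$ of their values at $y_0$. Also require $\|Tl\|>\|Tf\|+\|Tg\|+\|T(f+g)\|$.
\begin{itemize}
\item Off $\supp(Tl)$, the localization gives $Tf+T(g+l)\le\|Tf\|+\|Tg\|<\|Tl\|$.
\item On $\supp(Tl)$, $T(g+l)\le\|Tg+Tl\|\le Tg(y_0)+\epsilon+\|Tl\|$.
\item Since $\|T(g+l)\|\ge Tg(y_0)+\|Tl\|>\|Tg\|$, the maximum of $T(g+l)$ is attained on $\supp(Tl)$, where $Tf\ge Tf(y_0)-\epsilon$.
\end{itemize}
Together these show that $\|Tf+T(g+l)\|$ is within $2\epsilon$ of $Tf(y_0)+Tg(y_0)+\|Tl\|$. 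On the other hand, $\|Tf+T(g+l)\|=\|T(f+g)+Tl\|$, and this lies between $T(f+g)(y_0)+\|Tl\|$ and $T(f+g)(y_0)+\|Tl\|+\epsilon$. Letting $\epsilon\to0$ gives $T(f+g)(y_0)=Tf(y_0)+Tg(y_0)$. Your homogeneity argument then finishes the proof.
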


Hence, throughout the remainder of the paper, both $T$ and $T^{-1}$ may be regarded as additive and positively homogeneous.

We first observe that this algebraic rigidity immediately implies preservation of the natural order structure.

\begin{lem}\label{lem:order_iso}
The mapping $T$ is an order isomorphism; namely, for all $f,g\in\posC{X}$,
\[
f\le g \quad \text{if and only if} \quad Tf\le Tg.
\]
\end{lem}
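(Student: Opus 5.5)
The plan is to derive the order isomorphism directly from the additivity supplied by Lemma~\ref{lem:hirota_reduction}, applied to $T$ and then to $T^{-1}$. The key observation is that in $\posC{X}$ the order can be phrased purely in terms of the additive structure: $f\le g$ holds if and only if $g=f+k$ for some $k\in\posC{X}$.

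To justify this, suppose $f\le g$. Then $k=g-f$ is continuous, nonnegative, and vanishes at infinity, since both $f$ and $g$ do, so $k\in\posC{X}$. The converse direction is trivial. The same characterization holds verbatim in $\posC{Y}$.

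For the forward implication, assume $f\le g$ and write $g=f+k$ with $k\in\posC{X}$. Additivity of $T$ from Lemma~\ref{lem:hirota_reduction} gives
\[
Tg=T(f+k)=Tf+Tk .
\]
Since $Tk\in\posC{Y}$, it follows that $Tf\le Tg$.

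For the reverse implication, we first check that $T^{-1}\colon\posC{Y}\to\posC{X}$ is additive. Take $u,v\in\posC{Y}$ and set $f=T^{-1}u$ and $g=T^{-1}v$. Then
\[
T(f+g)=Tf+Tg=u+v,
\]
and applying $T^{-1}$ gives $T^{-1}(u+v)=T^{-1}u+T^{-1}v$. Now suppose $Tf\le Tg$. Write $Tg=Tf+w$ with $w\in\posC{Y}$, and apply $T^{-1}$ to get $g=f+T^{-1}w$. Because $T^{-1}w\in\posC{X}$, this yields $f\le g$.

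There is no genuine obstacle in this argument. The only point needing care is the closure of $\posC{X}$ under the differences used above. It holds because $g-f\ge0$ and $g-f\in\Czero{X}$, which is what makes the additive description of the order valid. The additivity of $T^{-1}$ was derived directly from that of $T$, so no new norm-additive hypothesis on $T^{-1}$ is required.
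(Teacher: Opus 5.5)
Your proof is correct and follows the paper's argument. The forward direction is the same decomposition $g=f+(g-f)$ combined with additivity. For the converse, the paper simply applies the same argument to $T^{-1}$ and takes its additivity as already known after Hirota's lemma, whereas you verify that additivity explicitly from bijectivity.
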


\begin{proof}
Assume that $f\le g$. Then $g-f\in\posC{X}$, and hence, by additivity of $T$,
\[
Tg=T(f+(g-f))=Tf+T(g-f)\ge Tf,
\]
since $T(g-f)\in\posC{Y}$. Thus $Tf\le Tg$.

The converse follows by applying the same argument to $T^{-1}$.
\end{proof}

Having established order preservation, we next investigate preservation of finite disjointness, where the positive cone starts to reflect the topology of the underlying spaces.
For this purpose, we first record a simple characterization of finite disjointness for positive functions.
See \cite{FontHernandez,Jarosz} for disjointness preserving maps.

\begin{lem}\label{lem:ortho_char}
For $n\in\N$ and
$f_1, \dots, f_n \in \posC{X}$,
\[
    f_1 \cdots f_n = 0 \quad \text{if and only if} \quad \min\{f_1, \dots, f_n\} = 0.
\]
\end{lem}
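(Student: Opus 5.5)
The statement is a pointwise fact about nonnegative real numbers, so I will prove it by evaluating both sides at an arbitrary point $x\in X$. Here $f_1\cdots f_n$ and $\min\{f_1,\dots,f_n\}$ are the pointwise product and pointwise minimum. Both belong to $\posC{X}$: each is continuous, and each vanishes at infinity because it is bounded above by a constant multiple of $f_1$. So both sides of the equivalence are equalities in $\posC{X}$, and it suffices to show that for each fixed $x$,
\[
f_1(x)\cdots f_n(x)=0 \quad\Longleftrightarrow\quad \min\{f_1(x),\dots,f_n(x)\}=0 .
\]

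For this pointwise statement, set $a_i=f_i(x)\ge 0$. Since the reals have no zero divisors, $a_1\cdots a_n=0$ holds exactly when $a_i=0$ for some $i$. Because every $a_i$ is nonnegative, some $a_i$ being zero is equivalent to $\min\{a_1,\dots,a_n\}=0$. Indeed, if some $a_i=0$, the minimum is at most $0$ and at least $0$. Conversely, the minimum is attained by some $a_i$, so that $a_i$ is zero.

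Since $x$ was arbitrary, the two functions vanish identically under exactly the same conditions, which proves the lemma. I expect no real obstacle here. The only point that needs care is that nonnegativity is what lets us pass from ``some $a_i=0$'' to ``the minimum is $0$''; for general real-valued functions this step fails, because the minimum could be negative.
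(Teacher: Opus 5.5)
Your proof is correct and matches the paper's argument: both reduce the lemma to the pointwise equivalence $a_1\cdots a_n=0 \iff \min\{a_1,\dots,a_n\}=0$ for nonnegative reals. You justify that equivalence in more detail than the paper does, and you also check that both functions lie in $\posC{X}$.
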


\begin{proof}
Since all functions involved are nonnegative, the pointwise identity
\[
f_1(x)\cdots f_n(x)=0
\quad\text{if and only if}\quad
\min\{f_1(x),\dots,f_n(x)\}=0
\]
holds for every $x\in X$. Hence the conclusion follows.
\end{proof}

The next lemma shows that $T$ preserves finite disjointness; in particular, $T$ is biseparating.

\begin{lem}\label{lem:biseparating}
For $f_1, \dots, f_n \in \posC{X}$,
\[
    f_1 \cdots f_n = 0 \quad \text{if and only if} \quad Tf_1 \cdots Tf_n = 0.
\]
\end{lem}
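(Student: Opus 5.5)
The plan is to reduce the statement, via Lemma~\ref{lem:ortho_char}, to showing that $T$ preserves finite pointwise minima and sends $0$ to $0$. Disjointness then becomes a purely order-theoretic property, which $T$ respects by Lemma~\ref{lem:order_iso}.

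First, $T0=0$. By additivity (Lemma~\ref{lem:hirota_reduction}) we have $T0=T(0+0)=T0+T0$, hence $T0=0$. Since $T$ is injective, $Tf=0$ if and only if $f=0$.

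Next, I show that $T$ commutes with finite minima:
\[
T(\min\{f_1,\dots,f_n\})=\min\{Tf_1,\dots,Tf_n\}.
\]
The pointwise minimum of finitely many functions in $\posC{X}$ again lies in $\posC{X}$. It is the greatest lower bound of $f_1,\dots,f_n$ in the poset $(\posC{X},\le)$, and the same holds in $\posC{Y}$. Put $m=\min\{f_1,\dots,f_n\}$. Since $m\le f_i$ for every $i$, Lemma~\ref{lem:order_iso} gives $Tm\le Tf_i$ for every $i$, so $Tm\le \min_i Tf_i$. Conversely, let $k=\min_i Tf_i\in\posC{Y}$. By surjectivity, $k=Tg$ for some $g\in\posC{X}$. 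From $Tg\le Tf_i$ for all $i$, the reverse implication in Lemma~\ref{lem:order_iso} yields $g\le f_i$ for all $i$, hence $g\le m$. Applying the lemma once more gives $k=Tg\le Tm$. Therefore $Tm=\min_i Tf_i$.

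Combining these steps with Lemma~\ref{lem:ortho_char} applied on both sides, we obtain
\[
f_1\cdots f_n=0 \iff m=0 \iff Tm=0 \iff \min_i Tf_i=0 \iff Tf_1\cdots Tf_n=0.
\]
The only step requiring care is the lattice step. There one must use the full order isomorphism, including surjectivity and the reverse implication, to get the inequality $\min_i Tf_i\le Tm$. Monotonicity of $T$ alone only gives $Tm\le\min_i Tf_i$.
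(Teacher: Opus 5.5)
Your proof is correct and follows essentially the same route as the paper. Both arguments pull $\min\{Tf_1,\dots,Tf_n\}$ back through $T^{-1}$ using the order isomorphism of Lemma~\ref{lem:order_iso} together with Lemma~\ref{lem:ortho_char}; the only difference is that you prove the full identity $T(\min_i f_i)=\min_i Tf_i$, which gives both implications in one chain, whereas the paper uses only $T^{-1}(\min_i Tf_i)\le\min_i f_i=0$ and gets the converse by applying the same argument to $T^{-1}$.
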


\begin{proof}
Since $T$ is additive, we have $T0=0$, and similarly $T^{-1}0=0$.

Assume that $f_1\cdots f_n=0$. By Lemma~\ref{lem:ortho_char},
\[
\min\{f_1,\dots,f_n\}=0.
\]
Set
\[
v=\min\{Tf_1,\dots,Tf_n\}\in\posC{Y}.
\]
Then $v\le Tf_i$ for each $i$, and hence, by order preservation of $T^{-1}$,
\[
T^{-1}v\le f_i
\qquad (i=1,\dots,n).
\]
Therefore,
\[
0\le T^{-1}v\le \min\{f_1,\dots,f_n\}=0,
\]
which implies that $T^{-1}v=0$. Since $T^{-1}0=0$ and $T^{-1}$ is injective, we obtain $v=0$. Applying Lemma~\ref{lem:ortho_char} again, it follows that
\[
Tf_1\cdots Tf_n=0.
\]

The converse is obtained by the same argument applied to $T^{-1}$.
\end{proof}

For each $y\in Y$, consider
\[
\mathcal{F}_y=\{f\in\posC{X}\mid Tf(y)>0\}.
\]
Our goal is to show that
\[
\bigcap_{f\in\mathcal{F}_y}\supp(f)
\]
consists of exactly one point. 

To carry out this construction, we first need a compactly supported member of $\mathcal{F}_y$, and for this purpose we establish a boundedness property of $T$.

\begin{defn}\label{def:T_bounded}
An additive and positively homogeneous mapping 
\[
T\colon\posC{X}\to\posC{Y}
\]
is said to be bounded if there exists $M>0$ such that
\[
\|Tf\|\le M\|f\|
\]
for all $f\in\posC{X}$.
\end{defn}

The following elementary observation will be useful for verifying boundedness.

\begin{lem}\label{lem:T_bounded_equiv}
Let $T\colon\posC{X}\to\posC{Y}$ be additive and positively homogeneous. Then the following are equivalent:
\begin{enumerate}
    \item[(i)] $T$ is bounded;
    \item[(ii)] there exists a constant $M > 0$ such that
    \[
        \|Tu\| \le M
    \]
    for all $u \in \posC{X}$ with $\|u\| \le 1$.
\end{enumerate}
\end{lem}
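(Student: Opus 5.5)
The plan is to prove the two implications separately, using only positive homogeneity of $T$ (which is part of the hypothesis, or may be taken from Lemma~\ref{lem:hirota_reduction}) together with the fact that the norm is absolutely homogeneous.

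For (i)$\Rightarrow$(ii), suppose $\|Tf\|\le M\|f\|$ for all $f\in\posC{X}$. If $u\in\posC{X}$ satisfies $\|u\|\le1$, then directly
\[
\|Tu\|\le M\|u\|\le M,
\]
so the same constant $M$ works in (ii).

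For (ii)$\Rightarrow$(i), fix $f\in\posC{X}$ and split into two cases.
\begin{enumerate}
\item[(a)] If $f=0$, then positive homogeneity with $r=0$ gives $T0=T(0\cdot 0)=0\cdot T0=0$. Hence $\|Tf\|=0\le M\|f\|$.
\item[(b)] If $f\ne0$, put $u=f/\|f\|$. Then $u\in\posC{X}$ and $\|u\|=1$, so (ii) gives $\|Tu\|\le M$. Since $f=\|f\|\,u$, positive homogeneity yields $Tf=\|f\|\,Tu$, and therefore
\[
\|Tf\|=\|f\|\,\|Tu\|\le M\|f\|.
\]
\end{enumerate}
This is exactly the boundedness of $T$ in the sense of Definition~\ref{def:T_bounded}.

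The argument has no genuine obstacle. The only point needing care is the case $f=0$, where normalization is impossible. There one needs $T0=0$, which follows from positive homogeneity with $r=0$, or from additivity via $T0=T0+T0$. Additivity itself is not used anywhere else.
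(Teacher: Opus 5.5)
Your proof is correct and follows the paper's argument exactly: (i)$\Rightarrow$(ii) is immediate, and for (ii)$\Rightarrow$(i) you normalize $u=f/\|f\|$ and apply positive homogeneity. Your explicit check that $T0=0$ in the case $f=0$ simply spells out what the paper dismisses as ``nothing to prove.''
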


\begin{proof}
(i) $\Rightarrow$ (ii) is immediate.

(ii) $\Rightarrow$ (i). Let $f\in\posC{X}$. If $f=0$, there is nothing to prove. Assume $f\neq0$, and put
\[
u=\frac{f}{\|f\|}.
\]
Then $u\in\posC{X}$ and $\|u\|=1$, so that $\|T(u)\|\le M$ by (ii). By positive homogeneity,
\[
\|T(f)\|=\|f\|\,\|T(u)\|\le M\|f\|.
\]
Hence $T$ is bounded.
\end{proof}

To obtain a compactly supported function in $\mathcal F_y$, we need quantitative control on the size of images under $T$. 
This is provided by the following boundedness result, whose proof relies on the completeness of $\Czero{X}$.

\begin{lem}\label{lem:T_bounded}
The mappings
\[
T\colon\posC{X}\to\posC{Y}
\quad\text{and}\quad
T^{-1}\colon\posC{Y}\to\posC{X}
\]
are both bounded.
\end{lem}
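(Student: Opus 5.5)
We argue by contradiction, using the additivity and positive homogeneity of $T$ from Lemma~\ref{lem:hirota_reduction}, the order preservation from Lemma~\ref{lem:order_iso}, and a Baire-type summation that uses the completeness of $\Czero{X}$. By symmetry it suffices to treat $T$, since $T^{-1}$ is also a bijection satisfying the same norm additive condition. Indeed, for $u,v\in\posC{Y}$ we have $T^{-1}(u+v)=T^{-1}u+T^{-1}v$ by additivity, so $\|T^{-1}(u+v)\|=\|T^{-1}u+T^{-1}v\|$ holds trivially.

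Suppose $T$ is not bounded. By Lemma~\ref{lem:T_bounded_equiv} there are $u_n\in\posC{X}$ with $\|u_n\|\le 1$ and $\|Tu_n\|\ge 4^n$ for each $n\in\N$. Put
\[
f=\sum_{n=1}^\infty 2^{-n}u_n .
\]
The series converges absolutely in the Banach space $\Czero{X}$. Its partial sums lie in $\posC{X}$, which is closed, so $f\in\posC{X}$. For every $n$ we have $f\ge 2^{-n}u_n$ pointwise, because the remaining terms are nonnegative. Lemma~\ref{lem:order_iso} and positive homogeneity then give
\[
Tf\ge T(2^{-n}u_n)=2^{-n}Tu_n\ge 0 .
\]
Taking norms, which is legitimate for nonnegative functions, $\|Tf\|\ge 2^{-n}\|Tu_n\|\ge 2^n$ for all $n$. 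This contradicts $\|Tf\|<\infty$, so $T$ is bounded. The same argument applied to $T^{-1}$, which is additive, positively homogeneous and order preserving, shows that $T^{-1}$ is bounded.

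The only delicate point is making sure that monotonicity of the norm on the positive cone is used correctly: $0\le g\le h$ implies $\|g\|\le\|h\|$. The series construction itself is the standard step, and it is exactly where completeness of $\Czero{X}$ enters. No continuity of $T$ is needed, because order preservation replaces it.
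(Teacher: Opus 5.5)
Your proof is correct and follows essentially the same argument as the paper: sum $2^{-n}u_n$ in the complete space $\Czero{X}$, then use order preservation and positive homogeneity to get $\|Tf\|\ge 2^{-n}\|Tu_n\|$, which is unbounded in $n$. The choice of $4^n$ instead of the paper's $n2^n$ is immaterial. Calling the summation ``Baire-type'' is a misnomer, since only completeness and absolute convergence are used, not Baire category.
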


\begin{proof}
Assume to the contrary that $T$ is not bounded. Then, by Lemma~\ref{lem:T_bounded_equiv}, for each $n\in\mathbb N$ there exists $u_n\in\posC{X}$ such that $\|u_n\|\le1$ and
\[
\|Tu_n\|>n2^n.
\]

Set $f_n=2^{-n}u_n$. Then $\|f_n\|\le2^{-n}$, and hence the series $\sum_{n=1}^\infty f_n$ converges in norm to some $f\in\Czero{X}$, since $\Czero{X}$ is complete. Clearly $f\in\posC{X}$.

Moreover, $f\ge f_n$ for every $n$, so that by order preservation and positive homogeneity of $T$,
\[
T(f)\ge Tf_n=2^{-n}Tu_n\ge0.
\]
Therefore,
\[
\|Tf\|\ge 2^{-n}\|Tu_n\|>n
\]
for all $n\in\mathbb N$, which is impossible because $Tf\in\Czero{Y}$.

Thus $T$ is bounded. The boundedness of $T^{-1}$ follows by the same argument.
\end{proof}

As an immediate consequence of boundedness and order preservation, we obtain a Lipschitz-type continuity estimate for $T$.

\begin{lem}\label{lem:T_continuous}
There exists $M>0$ such that
\[
\|Tf-Tg\|\leq M\|f-g\|
\qquad(f,g\in\posC{X}).
\]
\end{lem}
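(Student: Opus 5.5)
The idea is to reduce the difference $f-g$, which need not lie in the positive cone, to positive functions, using the lattice structure and Lemma~\ref{lem:T_bounded}. Let $f,g\in\posC{X}$ and set
\[
a=\max\{f,g\}-g,\qquad b=\max\{f,g\}-f .
\]
Both $a$ and $b$ lie in $\posC{X}$, and $f-g=a-b$. Moreover $\min\{a,b\}=0$: at each point either $f\ge g$, and then $b=0$ there, or $g\ge f$, and then $a=0$ there. Hence $\|f-g\|=\max\{\|a\|,\|b\|\}$, because $|f-g|=a+b$ pointwise and at each point at most one of $a,b$ is nonzero.

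Next, write $w=\max\{f,g\}\in\posC{X}$. Since $w=g+a$ and $w=f+b$, additivity of $T$ (Lemma~\ref{lem:hirota_reduction}) gives $Tw=Tg+Ta=Tf+Tb$, so
\[
Tf-Tg=Ta-Tb .
\]
By Lemma~\ref{lem:biseparating} with $n=2$, $ab=0$ implies $Ta\cdot Tb=0$. So $Ta$ and $Tb$ have disjoint cozero sets, and pointwise $|Tf-Tg|=|Ta-Tb|=Ta+Tb$, with at most one term nonzero at each point. Therefore $\|Tf-Tg\|=\max\{\|Ta\|,\|Tb\|\}$.

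Finally, Lemma~\ref{lem:T_bounded} provides $M>0$ with $\|Tu\|\le M\|u\|$ for all $u\in\posC{X}$. Combining,
\[
\|Tf-Tg\|=\max\{\|Ta\|,\|Tb\|\}\le M\max\{\|a\|,\|b\|\}=M\|f-g\| .
\]
There is no serious obstacle. The only point requiring care is the disjointness of $Ta$ and $Tb$, which is exactly what Lemma~\ref{lem:biseparating} supplies. If one wants to avoid it, one can instead use $\|Ta-Tb\|\le\|Ta\|+\|Tb\|\le 2M\|f-g\|$, which gives the claim with constant $2M$.
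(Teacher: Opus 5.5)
Your proof is correct, but it takes a different route from the paper. The paper sets $k=|f-g|$ and uses only additivity and the monotonicity of Lemma~\ref{lem:order_iso}. From $f\le g+k$ and $g\le f+k$ it gets $Tf\le Tg+Tk$ and $Tg\le Tf+Tk$. Hence $|Tf-Tg|\le T(|f-g|)$ pointwise, and $\|Tf-Tg\|\le\|T(|f-g|)\|\le M\|f-g\|$.

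You instead split $f-g=a-b$ into its positive and negative parts and use disjointness preservation (Lemma~\ref{lem:biseparating}) to get $Ta\,Tb=0$. This yields a sharper statement than the paper's: $|Tf-Tg|=Ta+Tb=T(a+b)=T(|f-g|)$, an equality instead of an inequality, so $\|Tf-Tg\|=\max\{\|Ta\|,\|Tb\|\}$ exactly. The cost is a heavier ingredient. Lemma~\ref{lem:biseparating} relies on order preservation of $T^{-1}$, hence on bijectivity. The paper's sandwich argument needs only forward monotonicity, which follows from additivity alone, so it works for any bounded additive map on the positive cone.

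Your fallback with constant $2M$ is even more elementary than both arguments. It uses only the identity $Tf-Tg=Ta-Tb$ (additivity) and boundedness. Since the lemma only asks for some $M>0$, it suffices equally well.
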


\begin{proof}
Let $f,g\in\posC{X}$ and set $h=|f-g|\in\posC{X}$. Then
\[
f\le g+h
\qquad\text{and}\qquad
g\le f+h.
\]
By additivity and order preservation of $T$,
\[
Tf\le Tg+Th
\qquad\text{and}\qquad
Tg\le Tf+Th.
\]
Hence
\[
|Tf(y)-Tg(y)|\le Th(y)
\]
for all $y\in Y$, and therefore
\[
\|Tf-Tg\|\le \|Th\|.
\]
Since $T$ is bounded, there exists $M>0$ such that
\[
\|Th\|\le M\|h\|=M\|f-g\|.
\]
This proves the assertion.
\end{proof}

We now use the continuity estimate to produce a compactly supported function belonging to $\mathcal F_y$.

\begin{lem}\label{lem:compact_support_exist}
For each $y \in Y$, the family $\F_y$ contains a function with compact support.
\end{lem}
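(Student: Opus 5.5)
First I would show that $\F_y$ is nonempty. Fix $y\in Y$ and choose, by Urysohn's lemma for locally compact Hausdorff spaces, some $k\in\posC{Y}$ with $k(y)>0$. Since $T$ is surjective, $k=Tf$ for some $f\in\posC{X}$. Then $Tf(y)=k(y)>0$, so $f\in\F_y$ and $Tf(y)=:\delta>0$.

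Next I would approximate $f$ by compactly supported functions in norm. For $\varepsilon>0$, put
\[
f_\varepsilon=\max\{f-\varepsilon,0\}\in\posC{X}.
\]
Because $f$ vanishes at infinity, the set $\{x:f(x)\ge\varepsilon\}$ is compact. The function $f_\varepsilon$ vanishes outside this set, so its support is a closed subset of a compact set and is therefore compact. We also have $0\le f-f_\varepsilon\le\varepsilon$, hence $\|f-f_\varepsilon\|\le\varepsilon$.

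Now I would apply Lemma~\ref{lem:T_continuous}, which gives some $M>0$ with
\[
|Tf(y)-Tf_\varepsilon(y)|\le\|Tf-Tf_\varepsilon\|\le M\varepsilon.
\]
Choosing $\varepsilon<\delta/M$ yields $Tf_\varepsilon(y)\ge\delta-M\varepsilon>0$. Thus $f_\varepsilon\in\F_y$ and $f_\varepsilon$ has compact support, which proves the lemma.

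There is no real obstacle here. The only points that need checking are the compactness of $\supp(f_\varepsilon)$, which uses the vanishing-at-infinity condition, and the existence of $k$, which uses Urysohn's lemma in $Y$. The quantitative step is handled entirely by the Lipschitz estimate already established in Lemma~\ref{lem:T_continuous}.
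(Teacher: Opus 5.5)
Your proof is correct and follows the paper's argument. You pick $f$ with $Tf(y)>0$ via surjectivity, truncate to $\max\{f-\varepsilon,0\}$, whose support lies in the compact set $\{f\ge\varepsilon\}$, and use the Lipschitz estimate of Lemma~\ref{lem:T_continuous} to keep $Tf_\varepsilon(y)>0$; your Urysohn argument for $\F_y\neq\emptyset$ just spells out a step the paper states in one line.
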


\begin{proof}
Fix $y\in Y$. Since $T$ is surjective, there exists $g\in\posC{X}\setminus\{0\}$ such that $Tg(y)>0$.

For each $n\in\mathbb N$, define
\[
f_n(x)=\max\{0,g(x)-1/n\}.
\]
Then $f_n\in\posC{X}$ and $\|g-f_n\|\le1/n$.

Since $g$ vanishes at infinity and is nonzero, the set
\[
K_n=\{x\in X\mid g(x)\ge1/n\}
\]
is a nonempty compact subset of $X$ for all sufficiently large $n$. Moreover,
\[
\supp(f_n)\subset K_n,
\]
so that $f_n$ has compact support for such $n$.

By Lemma~\ref{lem:T_continuous}, there exists $M>0$ such that
\[
\|Tg-Tf_n\|\le M\|g-f_n\|\le \frac{M}{n}.
\]
Hence
\[
Tf_n(y)\ge Tg(y)-\frac{M}{n}>0
\]
for all sufficiently large $n$. Therefore $f_n\in\mathcal F_y$.
\end{proof}

We are now ready to identify the unique point of $X$ encoded by the value of $Tf(y)$.

\begin{lem}\label{lem:support_intersection}
For each $y \in Y$, the intersection
\[
\bigcap_{f \in \F_y} \supp(f)
\]
consists of exactly one point.
\end{lem}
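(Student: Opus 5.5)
Write $S_y=\bigcap_{f\in\F_y}\supp(f)$. The proof splits into nonemptiness and uniqueness, and both parts rest on the biseparating property of Lemma~\ref{lem:biseparating}. The additivity and order preservation from Lemmas~\ref{lem:hirota_reduction} and~\ref{lem:order_iso} are used along the way.

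\emph{Nonemptiness.} By Lemma~\ref{lem:compact_support_exist}, fix $f_0\in\F_y$ with compact support $K=\supp(f_0)$. By compactness it suffices to show that the closed sets $\supp(f)\cap K$ ($f\in\F_y$) have the finite intersection property. So take $f_1,\dots,f_n\in\F_y$. The functions $Tf_0,Tf_1,\dots,Tf_n$ are all positive at $y$, so $Tf_0\cdots Tf_n\neq0$. By Lemma~\ref{lem:biseparating}, $f_0f_1\cdots f_n\neq0$, hence $\coz(f_0)\cap\cdots\cap\coz(f_n)\neq\emptyset$. Thus $K\cap\supp(f_1)\cap\cdots\cap\supp(f_n)\neq\emptyset$, and therefore $S_y\neq\emptyset$.

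\emph{Uniqueness.} Suppose $x_1\neq x_2$ both lie in $S_y$. Using Urysohn's lemma for locally compact Hausdorff spaces, choose open neighborhoods $U_1\ni x_1$ and $U_2\ni x_2$ with disjoint closures, together with compactly supported $u_i\in\posC{X}$ satisfying $u_i(x_i)=1$ and $\supp(u_i)\subset U_i$. I then aim to show that some function supported away from $x_1$ or from $x_2$ lies in $\F_y$, which contradicts $x_1,x_2\in S_y$. The approach is to decompose $f_0$. Let $v_1,v_2\in\posC{X}$ be Urysohn functions with $v_1=1$ near $\overline{U_1}$ and $v_1=0$ near $\overline{U_2}$, and symmetrically for $v_2$. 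Since $f_0$ has compact support, one can arrange the split as follows: set $g_1=\min\{f_0,\,\|f_0\|\,\phi\}$, where $\phi\in\posC{X}$ satisfies $\phi=1$ on a neighborhood of $x_2$, $\phi=0$ on a neighborhood of $x_1$ and $0\le\phi\le1$, and set $g_2=f_0-g_1\ge0$. Then $g_2=0$ near $x_2$ because $g_1=f_0$ there, and $g_1=0$ near $x_1$. By additivity, $Tf_0(y)=Tg_1(y)+Tg_2(y)>0$, so one of $g_1,g_2$ lies in $\F_y$. That $g_i$ has support missing $x_1$ or $x_2$, giving the contradiction.

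Checking this decomposition is the delicate step: $g_2=f_0-\min\{f_0,\|f_0\|\phi\}=\max\{0,f_0-\|f_0\|\phi\}$ must be continuous, nonnegative and vanishing at infinity, and it must vanish on a neighborhood of $x_2$. These are routine: on the open set where $\phi=1$ we have $f_0\le\|f_0\|\phi$, so $g_2=0$ there. Also $g_1\le\|f_0\|\phi$ vanishes on the neighborhood of $x_1$ where $\phi=0$. Under this decomposition, the distinct neighborhoods $U_1,U_2$ are needed only to produce $\phi$, so the $u_i$ may turn out to be unnecessary. The genuine obstacle is nonemptiness, which is exactly why the compact-support element from Lemma~\ref{lem:compact_support_exist} is required.
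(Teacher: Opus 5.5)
Your proof is correct. The nonemptiness half is the paper's argument phrased through the finite intersection property instead of by contradiction. Both use the compactly supported $f_0$ from Lemma~\ref{lem:compact_support_exist}, and both use Lemma~\ref{lem:biseparating} to turn $Tf_0(y)\cdots Tf_n(y)>0$ into a common point of the cozero sets.

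For uniqueness you take a genuinely different and more economical route. The paper fixes $f\in\mathcal F_y$ and localizes it near both points with disjointly supported Urysohn functions $w_1,w_2$. It then applies Lemma~\ref{lem:biseparating} to $fw_1\cdot fw_2=0$ to get, say, $T(fw_1)(y)=0$, and uses additivity to show that $f-fw_1$, which vanishes near $x_1$, lies in $\mathcal F_y$. You instead split $f_0=g_1+g_2$, with $g_1=\min\{f_0,\|f_0\|\phi\}$ vanishing near $x_1$ and $g_2=\max\{0,f_0-\|f_0\|\phi\}$ vanishing near $x_2$. Since $Tg_1(y),Tg_2(y)\ge 0$ and their sum is $Tf_0(y)>0$, one of them lies in $\mathcal F_y$, which contradicts $x_1,x_2\in\bigcap_{f\in\mathcal F_y}\supp(f)$.

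Your uniqueness step therefore uses only additivity and the fact that $T$ takes values in $C_0^+(Y)$, with no appeal to biseparation. It works for any $f\in\mathcal F_y$, so the compact support of $f_0$ plays no role there, despite your remark. The product split $f_0=f_0\phi+f_0(1-\phi)$ would serve equally well. The auxiliary functions $u_i$ and $v_i$ are never used and can be deleted; a single $\phi$, for instance the paper's $w_2$, suffices.
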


\begin{proof}
Fix $y\in Y$. We first show that the intersection is nonempty. Suppose to the contrary that
\[
\bigcap_{f\in\mathcal F_y}\supp(f)=\emptyset.
\]
By Lemma~\ref{lem:compact_support_exist}, choose $f_0\in\mathcal F_y$ with compact support. Then the family
\[
\{\supp(f)\cap\supp(f_0)\}_{f\in\mathcal F_y}
\]
consists of closed subsets of the compact set $\supp(f_0)$ with empty intersection. By compactness, there exist $f_1,\dots,f_m\in\mathcal F_y$ such that
\[
\supp(f_0)\cap\supp(f_1)\cap\cdots\cap\supp(f_m)=\emptyset.
\]
Hence
\[
f_0f_1\cdots f_m=0.
\]
Lemma~\ref{lem:biseparating} yields
\[
Tf_0Tf_1\cdots Tf_m=0.
\]
However, all factors are strictly positive at $y$, a contradiction.

It remains to prove uniqueness. Suppose that
\[
x_1,x_2\in\bigcap_{f\in\mathcal F_y}\supp(f)
\quad\text{with}\quad x_1\ne x_2.
\]
Choose disjoint open neighborhoods $U_1$ and $U_2$ of $x_1$ and $x_2$, respectively. By Urysohn's lemma, there exist $w_1,w_2\in\posC{X}$ such that
\[
0\le w_i\le1,\qquad w_i=1 \text{ on a neighborhood of }x_i,\qquad \supp(w_i)\subset U_i
\]
for $i=1,2$. In particular, $w_1w_2=0$.

Fix $f\in\mathcal F_y$ and set $g_i=fw_i$ $(i=1,2)$. Then $g_1g_2=0$, and hence
\[
Tg_1Tg_2=0
\]
by Lemma~\ref{lem:biseparating}. Thus at least one of $Tg_1(y)$ and $Tg_2(y)$ is zero; assume $Tg_1(y)=0$.

Set $g=f-g_1=f-fw_1$. Then $g\in\posC{X}$ and
\[
Tg(y)=Tf(y)-Tg_1(y)=Tf(y)>0,
\]
where we have used additivity of $T$.
Hence $g\in\mathcal F_y$. On the other hand, since $w_1=1$ on a neighborhood of $x_1$, the function $g$ vanishes on a neighborhood of $x_1$, and therefore
\[
x_1\notin\supp(g),
\]
contrary to the definition of the intersection. This proves uniqueness.
\end{proof}

\begin{defn}\label{def:tau}
For each $y\in Y$, let $\tau(y)$ denote the unique point satisfying
\[
\bigcap_{f\in\mathcal F_y}\supp(f)=\{\tau(y)\}.
\]
This defines a mapping
\[
\tau\colon Y\to X.
\]
\end{defn}

The defining property of $\tau$ immediately yields the following locality relation:
\begin{equation}\label{eq:locality}
    \tau(y)\notin \supp(f) \quad \Longrightarrow \quad Tf(y)=0
    \qquad (f\in \posC{X},\ y\in Y).
\end{equation}
Indeed, if $Tf(y)>0$, then $f\in\mathcal F_y$, and hence $\tau(y)\in\supp(f)$ by definition.

We next show that the point map $\tau$ obtained above is continuous.

\begin{lem}\label{lem:tau_continuous}
The mapping $\tau \colon Y \to X$ is continuous.
\end{lem}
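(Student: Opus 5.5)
We argue by contradiction, using nets. Suppose $\tau$ is not continuous at some $y_0\in Y$. Then there are an open neighborhood $U$ of $x_0=\tau(y_0)$ and a net $(y_\alpha)$ in $Y$ with $y_\alpha\to y_0$ and $\tau(y_\alpha)\notin U$ for every $\alpha$. Working with the neighborhood $U$ directly, we can avoid nets altogether: it suffices to show that every $y$ in some open neighborhood $V$ of $y_0$ satisfies $\tau(y)\in U$. So we aim to build such a $V$.

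The plan is to produce a function $f\in\posC{X}$ with $\supp(f)\subset U$ and $Tf(y_0)>0$, and then take $V=\{y\in Y: Tf(y)>0\}$. This set is open because $Tf$ is continuous, and it contains $y_0$. For $y\in V$ we have $f\in\mathcal F_y$, so $\tau(y)\in\supp(f)\subset U$ by Definition~\ref{def:tau}, which is exactly what is needed. To make the closure step safe, we first shrink $U$. Since $X$ is locally compact Hausdorff, we choose an open $W$ with $x_0\in W\subset\overline W\subset U$, and we arrange $\supp(f)\subset \overline W$.

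The main obstacle is constructing $f$. We repeat the splitting trick from the uniqueness part of Lemma~\ref{lem:support_intersection}. By Lemma~\ref{lem:compact_support_exist}, pick $g\in\mathcal F_{y_0}$. By Urysohn's lemma, choose $w\in\posC{X}$ with $0\le w\le 1$, $w=1$ on a neighborhood of $x_0$, and $\supp(w)\subset W$. We then write
\[
g=gw+(g-gw),
\]
and both summands lie in $\posC{X}$. The function $g-gw$ vanishes on a neighborhood of $x_0$, so $x_0=\tau(y_0)\notin\supp(g-gw)$. The locality relation \eqref{eq:locality} then gives $T(g-gw)(y_0)=0$. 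Additivity of $T$ (Lemma~\ref{lem:hirota_reduction}) now yields
\[
T(gw)(y_0)=Tg(y_0)>0.
\]
Hence $f=gw$ works, since $\supp(f)\subset\supp(w)\subset\overline W\subset U$. This completes the argument, and it uses only the locality relation, additivity, and Urysohn's lemma.
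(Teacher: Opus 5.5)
Your proof is correct, but it takes a different route from the paper's.

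The paper fixes a compactly supported $f_0\in\F_{y_0}$. It then uses compactness of $\supp(f_0)$ together with $\bigcap_{f\in\F_{y_0}}\supp(f)=\{x_0\}$ to extract finitely many $f_1,\dots,f_n\in\F_{y_0}$ with $\bigcap_{i=0}^n\supp(f_i)\subset U$. The neighborhood of $y_0$ is then $\bigcap_{i=0}^n\{y:Tf_i(y)>0\}$.

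You instead push a single member of $\F_{y_0}$ into $U$ by the cutoff $g=gw+g(1-w)$. The locality relation \eqref{eq:locality} kills $T(g-gw)$ at $y_0$, and additivity gives $T(gw)(y_0)=Tg(y_0)>0$. So one function $f=gw$ with $\supp(f)\subset U$ already does the job, and no finite-intersection argument is needed.

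Your route is shorter, and it never uses compactness of $\supp(g)$. You only need $\F_{y_0}\neq\emptyset$, which follows from surjectivity of $T$, so the appeal to Lemma~\ref{lem:compact_support_exist} is more than you need. The shrinking to $\overline W$ is likewise unnecessary, since Urysohn's lemma already gives $\supp(w)\subset U$.

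In exchange, the paper's argument is purely topological at this stage. It uses only the defining property of $\tau$ and one compactly supported member of $\F_{y_0}$, with no further appeal to additivity of $T$.

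Finally, the opening sentences about a contradiction via nets play no role in what you prove and can be deleted, since your argument is direct.
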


\begin{proof}
Let $U\subset X$ be open and $y_0\in\tau^{-1}(U)$. Set $x_0=\tau(y_0)\in U$.
It follows from the definition of the mapping $\tau$ that
\[
    \bigcap_{f \in \F_{y_0}} \supp(f)=\{\tau(y_0)\}=\{x_0\}.
\]
By Lemma~\ref{lem:compact_support_exist}, choose $f_0\in\mathcal F_{y_0}$ with compact support.
Because $x_0\in U$, we obtain $x_0\not\in\supp(f_0)\setminus U$.
Since
\[
\bigcap_{f\in\mathcal F_{y_0}}\supp(f)=\{x_0\}
\]
and $x_0\notin \supp(f_0)\setminus U$, we obtain
\[
\Bigl(\bigcap_{f\in\mathcal F_{y_0}}\supp(f)\Bigr)\cap(\supp(f_0)\setminus U)=\emptyset.
\]
The family
$    \{ \supp(f) \cap (\supp(f_0) \setminus U):f \in \F_{y_0}\}$
consists of closed subsets of the compact set $\supp(f_0)$
and has empty intersection by the above equality.
Hence there exist
$f_1, \dots, f_n \in \F_{y_0}$ such that
\[
    \left( \bigcap_{i=1}^n \supp(f_i) \right) \cap (\supp(f_0) \setminus U) = \emptyset.
\]
This implies that
\[
    \bigcap_{i=0}^n \supp(f_i) \subset U.
\]

Set
\[
    W = \bigcap_{i=0}^n \{ y \in Y \mid Tf_i(y) > 0 \}.
\]
Then $W$ is an open neighborhood of $y_0$, since $f_i\in\F_{y_0}$ for each $i$.
If $y \in W$, then $f_i \in \F_y$ for all $i \in \{0, \dots, n\}$,
and therefore
\[
    \tau(y) \in \bigcap_{i=0}^n \supp(f_i) \subset U.
\]
Thus $W\subset\tau^{-1}(U)$, showing that $\tau^{-1}(U)$ is open. Hence $\tau$ is continuous.
\end{proof}

\begin{lem}\label{lem:homeo}
The mapping $\tau\colon Y\to X$ is a homeomorphism.
\end{lem}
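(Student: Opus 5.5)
The plan is to run the same construction on $T^{-1}$ and show that the resulting point map is the inverse of $\tau$. Since $T^{-1}\colon\posC{Y}\to\posC{X}$ is a bijection that is additive and positively homogeneous (Lemma~\ref{lem:hirota_reduction} applied to $T$, then inverted), it is order preserving and biseparating, and it is bounded (Lemma~\ref{lem:T_bounded}). Every lemma from Lemma~\ref{lem:order_iso} through Lemma~\ref{lem:tau_continuous} used only these properties of $T$ and $T^{-1}$. They therefore apply verbatim with the roles of $X,Y$ exchanged. This yields a continuous map $\sigma\colon X\to Y$ with
\[
\bigcap_{g\in\mathcal G_x}\supp(g)=\{\sigma(x)\},\qquad \mathcal G_x=\{g\in\posC{Y}: T^{-1}g(x)>0\},
\]
together with the analogue of \eqref{eq:locality}: if $\sigma(x)\notin\supp(g)$, then $T^{-1}g(x)=0$. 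It then suffices to prove $\sigma\circ\tau=\mathrm{id}_Y$ and $\tau\circ\sigma=\mathrm{id}_X$. By symmetry I will only do the first.

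Fix $y\in Y$, put $x=\tau(y)$, and suppose $\sigma(x)\neq y$. Choose disjoint open sets $V\ni y$ and $V'\ni\sigma(x)$ in $Y$. By Urysohn's lemma, take $k\in\posC{Y}$ with $k(y)>0$ and $\supp(k)\subset V$, so $\sigma(x)\notin\supp(k)$. Let $f=T^{-1}k$. Then $Tf(y)=k(y)>0$, so $f\in\mathcal F_y$. The key task is to contradict $x\in\supp(f)$. Locality only gives $f(x)=T^{-1}k(x)=0$, which is weaker than $x\notin\supp(f)$. To upgrade, I will use that $\sigma(x)\notin\supp(k)$ together with continuity of $\sigma$. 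The set $\sigma^{-1}(Y\setminus\supp(k))$ is an open neighbourhood $N$ of $x$, and for every $x'\in N$ we have $\sigma(x')\notin\supp(k)$. The analogue of \eqref{eq:locality} at $x'$ then gives $f(x')=T^{-1}k(x')=0$. So $f$ vanishes on the neighbourhood $N$ of $x$, hence $x\notin\supp(f)$. This contradicts $x=\tau(y)\in\supp(f)$, which holds by Definition~\ref{def:tau} since $f\in\mathcal F_y$. Hence $\sigma(\tau(y))=y$.

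The symmetric argument, with $\tau$ continuous by Lemma~\ref{lem:tau_continuous}, gives $\tau(\sigma(x))=x$. Thus $\tau$ is a continuous bijection with continuous inverse $\sigma$, i.e.\ a homeomorphism. The main point of care is the step upgrading ``$f(x)=0$'' to ``$x\notin\supp(f)$''. It is handled by combining locality with the continuity of the reverse point map, so Lemma~\ref{lem:tau_continuous} must be established for both $T$ and $T^{-1}$ before the inverse identities are proved. Another point to check is that the hypotheses used in the earlier lemmas (surjectivity, boundedness, biseparation) are genuinely symmetric in $T$ and $T^{-1}$. They are, because Lemmas~\ref{lem:order_iso}, \ref{lem:biseparating} and \ref{lem:T_bounded} are already stated in symmetric form.
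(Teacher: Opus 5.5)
Your proof is correct and is essentially the paper's argument. The paper proves $\tau\circ\sigma=\mathrm{id}_X$: it uses continuity of $\tau$ and the locality of $T$ to get $\sigma(x_0)\notin\supp(Tu)$, then applies the locality of $T^{-1}$. You prove the mirror-image identity $\sigma\circ\tau=\mathrm{id}_Y$ with the roles of $T$ and $T^{-1}$ exchanged, and get the other identity by symmetry, as the paper does.
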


\begin{proof}
Apply the preceding construction to
\[
T^{-1}\colon \posC{Y}\to\posC{X}.
\]
For each $x\in X$, set
\[
\mathcal G_x=\{v\in\posC{Y}:T^{-1}v(x)>0\}.
\]
Then there exists a continuous map $\sigma\colon X\to Y$ such that
\[
\bigcap_{v\in\mathcal G_x}\supp(v)=\{\sigma(x)\}.
\]
Moreover,
\begin{equation}\label{eq:locality_inv}
\sigma(x)\notin\supp(v)
\quad\Longrightarrow\quad
T^{-1}v(x)=0
\qquad (v\in\posC{Y},\ x\in X).
\end{equation}

We show that $\tau\circ\sigma=\mathrm{id}_X$. Suppose, to the contrary, that
\[
\tau(\sigma(x_0))\ne x_0
\]
for some $x_0\in X$, and put $x_1=\tau(\sigma(x_0))$.
Choose disjoint open neighborhoods $U_0$ and $U_1$ of $x_0$ and $x_1$, respectively. By Urysohn's lemma, there exists $u\in\posC{X}$ such that
\[
u(x_0)>0,\qquad \supp(u)\subset U_0.
\]
Then $\tau(\sigma(x_0))=x_1\notin\supp(u)$.

Since $\tau$ is continuous, 
\[
V=\tau^{-1}(X\setminus\supp(u))
\]
is an open neighborhood of $\sigma(x_0)$. For every $y\in V$, we have
$\tau(y)\notin\supp(u)$, and hence \eqref{eq:locality} gives $Tu=0$
on $V$. Thus
$V\cap\supp(Tu)=\emptyset$, in particular
 $\sigma(x_0)\notin\supp(Tu)$. Applying \eqref{eq:locality_inv} to
$v=Tu$, we obtain
\[
u(x_0)=T^{-1}(Tu)(x_0)=0,
\]
which contradicts $u(x_0)>0$. Hence $\tau\circ\sigma=\mathrm{id}_X$.

By symmetry, $\sigma\circ\tau=\mathrm{id}_Y$. Therefore $\tau$ is bijective and $\tau^{-1}=\sigma$ is continuous. Thus $\tau$ is a homeomorphism.
\end{proof}

\section{The weight and the representation formula}

Having constructed the homeomorphism $\tau\colon Y\to X$, we now determine the pointwise form of $T$. 
The remaining task is to identify the weight function associated with $T$.

\begin{lem}\label{lem:local_order_preserving}
Let $y\in Y$ and set $x_0=\tau(y)$. 
If $f,g\in\posC{X}$ satisfy $f\le g$ on an open neighborhood of $x_0$, then
\[
Tf(y)\le Tg(y).
\]
\end{lem}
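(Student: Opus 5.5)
The plan is to reduce the statement to the locality relation \eqref{eq:locality}, using additivity and order preservation of $T$ (Lemmas~\ref{lem:hirota_reduction} and~\ref{lem:order_iso}). Let $U$ be an open neighborhood of $x_0$ on which $f\le g$. The natural comparison object is $\min\{f,g\}$. Put $k=\min\{f,g\}\in\posC{X}$ and $d=f-k=\max\{f-g,0\}\in\posC{X}$. Then $f=k+d$, and additivity gives
\[
Tf(y)=Tk(y)+Td(y).
\]

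The key observation is that $d$ vanishes on $U$, because $f\le g$ there. Hence $\coz(d)\subset X\setminus U$, and since $X\setminus U$ is closed, $\supp(d)\subset X\setminus U$. In particular $x_0=\tau(y)\notin\supp(d)$. The locality relation \eqref{eq:locality} then yields $Td(y)=0$, and so $Tf(y)=Tk(y)$.

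Finally, $k\le g$ holds everywhere on $X$, so Lemma~\ref{lem:order_iso} gives $Tk\le Tg$, and in particular $Tk(y)\le Tg(y)$. Combining this with the previous step gives $Tf(y)=Tk(y)\le Tg(y)$, as required.

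The only point needing care is the support argument. We must check that $d$ vanishes on a genuine open neighborhood of $x_0$, so that $x_0$ lies outside the closed support and not merely outside $\coz(d)$. This is exactly why the hypothesis is stated on an open neighborhood rather than at the single point $x_0$. Beyond that, the argument is routine.
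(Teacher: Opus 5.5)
Your proof is correct and essentially the paper's argument: both isolate the function $\max\{f-g,0\}$, which vanishes on a neighborhood of $x_0$ and so satisfies $T\max\{f-g,0\}(y)=0$ by \eqref{eq:locality}. The only cosmetic difference is that you use the exact decomposition $f=\min\{f,g\}+\max\{f-g,0\}$ and then $\min\{f,g\}\le g$, whereas the paper uses the inequality $f\le g+\max\{f-g,0\}$ directly.
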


\begin{proof}
Define
\[
k(x)=\max\{0,f(x)-g(x)\}.
\]
Then $k\in\posC{X}$ and $f\le g+k$. By order preservation and additivity,
\[
Tf(y)\le Tg(y)+Tk(y).
\]
Since $f\le g$ on a neighborhood of $x_0$, the function $k$ vanishes on a neighborhood of $x_0$. Hence $x_0\notin\supp(k)$, and \eqref{eq:locality} gives $Tk(y)=0$. Therefore
$Tf(y)\le Tg(y)$.
\end{proof}

For each $y\in Y$, local compactness of $X$ allows us to choose a function
$e\in\posC{X}$ which is equal to $1$ on a neighborhood of $\tau(y)$.
We next show that the value $Te(y)$ does not depend on this choice.

\begin{lem}\label{lem:weight_well_defined}
Let $y\in Y$ and set $x_0=\tau(y)$. 
If $e_1,e_2\in\posC{X}$ are equal to $1$ on neighborhoods of $x_0$, then
\[
Te_1(y)=Te_2(y).
\]
\end{lem}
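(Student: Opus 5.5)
The plan is to apply Lemma~\ref{lem:local_order_preserving} twice, once in each direction. Let $U_1$ and $U_2$ be open neighborhoods of $x_0=\tau(y)$ on which $e_1=1$ and $e_2=1$, respectively. Put $U=U_1\cap U_2$. This is again an open neighborhood of $x_0$, and on $U$ we have $e_1=e_2=1$. In particular $e_1\le e_2$ holds on $U$.

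Applying Lemma~\ref{lem:local_order_preserving} with $f=e_1$ and $g=e_2$ gives $Te_1(y)\le Te_2(y)$. Reversing the roles, $e_2\le e_1$ also holds on $U$, so the same lemma gives $Te_2(y)\le Te_1(y)$. Combining the two inequalities yields $Te_1(y)=Te_2(y)$.

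Nothing here is a real obstacle, since Lemma~\ref{lem:local_order_preserving} already carries the content. It rests on the locality relation \eqref{eq:locality} together with additivity and order preservation of $T$. The only point to check is that the intersection of the two neighborhoods is an open neighborhood of $x_0$ on which both functions equal $1$, which is immediate.
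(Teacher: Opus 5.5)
Your proposal is correct and follows the paper's proof essentially verbatim: you intersect the neighborhoods on which $e_1$ and $e_2$ equal $1$, note that both $e_1\le e_2$ and $e_2\le e_1$ hold there, and apply Lemma~\ref{lem:local_order_preserving} in each direction. Taking the neighborhoods open is a slightly more careful touch, since it matches that lemma's hypothesis of an open neighborhood exactly.
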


\begin{proof}
Choose neighborhoods $U_i$ of $x_0$ such that $e_i=1$ on $U_i$ for $i=1,2$.
On $U_1\cap U_2$, we have both $e_1\le e_2$ and $e_2\le e_1$. 
Lemma~\ref{lem:local_order_preserving} therefore gives
\[
Te_1(y)\le Te_2(y)
\quad\text{and}\quad
Te_2(y)\le Te_1(y),
\]
and hence the desired equality follows.
\end{proof}

\begin{defn}\label{def:weight_function}
For each $y\in Y$, define
\[
h(y)=Te(y),
\]
where $e\in\posC{X}$ is any function equal to $1$ on a neighborhood of $\tau(y)$.
\end{defn}

This is well defined by Lemma~\ref{lem:weight_well_defined}.

\begin{lem}\label{lem:representation_formula}
For every $f\in\posC{X}$ and $y\in Y$, we have
\[
Tf(y)=h(y)f(\tau(y)).
\]
\end{lem}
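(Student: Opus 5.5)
Fix $y\in Y$, put $x_0=\tau(y)$, and let $f\in\posC{X}$. The plan is to squeeze $f$ between constant multiples of a bump function near $x_0$ and pass to the limit using the Lipschitz estimate of Lemma~\ref{lem:T_continuous}. Fix once and for all some $e\in\posC{X}$ with $0\le e\le1$ and $e=1$ on a neighborhood of $x_0$; such a function exists by Urysohn's lemma in the locally compact space $X$. Then $h(y)=Te(y)$ by Definition~\ref{def:weight_function}.

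\emph{Case $f(x_0)=0$.} Given $\varepsilon>0$, set $f_\varepsilon=\max\{0,f-\varepsilon\}$. Then $f_\varepsilon$ vanishes on the open neighborhood $\{f<\varepsilon\}$ of $x_0$, so $x_0\notin\supp(f_\varepsilon)$, and the locality relation \eqref{eq:locality} gives $Tf_\varepsilon(y)=0$. Since $\|f-f_\varepsilon\|\le\varepsilon$, Lemma~\ref{lem:T_continuous} yields $Tf(y)\le M\varepsilon$. Letting $\varepsilon\to0$ gives $Tf(y)=0=h(y)f(x_0)$.

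\emph{Case $c=f(x_0)>0$.} For $\varepsilon\in(0,c)$ there is an open neighborhood $U$ of $x_0$ on which $c-\varepsilon\le f\le c+\varepsilon$ and $e=1$. On $U$ we then have $(c-\varepsilon)e\le f\le(c+\varepsilon)e$. By Lemma~\ref{lem:local_order_preserving} and positive homogeneity (Lemma~\ref{lem:hirota_reduction}),
\[
(c-\varepsilon)h(y)\le Tf(y)\le(c+\varepsilon)h(y).
\]
Letting $\varepsilon\to0$ gives $Tf(y)=c\,h(y)$. No continuity estimate is needed in this case, since Lemma~\ref{lem:local_order_preserving} handles the local comparison directly.

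The only mildly delicate point is the zero case. Local comparison with $0\cdot e$ would require $f=0$ near $x_0$, not just at $x_0$, which is why the approximation by $f_\varepsilon$ and Lemma~\ref{lem:T_continuous} are used there. Alternatively, one can compare $f\le\varepsilon e$ on a neighborhood of $x_0$ and apply Lemma~\ref{lem:local_order_preserving} to get $Tf(y)\le\varepsilon h(y)$, which avoids the Lipschitz estimate altogether. Either route gives the formula for all $f$ and $y$.
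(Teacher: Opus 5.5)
Your proof is correct and is essentially the paper's argument: sandwich $f$ near $\tau(y)$ between $(f(\tau(y))\mp\varepsilon)e$, apply Lemma~\ref{lem:local_order_preserving} and positive homogeneity, and let $\varepsilon\to0$. The only difference is that the paper avoids your case split by using the lower bound $\max\{0,f(\tau(y))-\varepsilon\}e$. That is exactly your alternative $f\le\varepsilon e$ route for the zero case, so Lemma~\ref{lem:T_continuous} is not needed there.
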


\begin{proof}
Let $f\in\posC{X}$ and $y\in Y$, and put $x_0=\tau(y)$. 
Fix $\epsilon>0$. By continuity of $f$ at $x_0$, there exists an open neighborhood $V$ of $x_0$ such that
\[
f(x_0)-\epsilon\le f(x)\le f(x_0)+\epsilon
\qquad (x\in V).
\]
Choose $e\in\posC{X}$ such that $e=1$ on
an open neighborhood $W$ of $x_0$ with $W\subset V$. Since $f\ge0$, on $W$ we have
\[
\max\{0,f(x_0)-\epsilon\}e
\le f
\le (f(x_0)+\epsilon)e.
\]
By Lemma~\ref{lem:local_order_preserving} and positive homogeneity,
\[
\max\{0,f(x_0)-\epsilon\}Te(y)
\le Tf(y)
\le (f(x_0)+\epsilon)Te(y).
\]
Letting $\epsilon\downarrow0$
with $Te(y)=h(y)$ yields
\[
Tf(y)=h(y)f(x_0)=h(y)f(\tau(y)).
\]
This proves the assertion.
\end{proof}

\begin{lem}\label{lem:weight_properties}
The function $h$ is continuous, strictly positive, bounded, and bounded away from zero.
\end{lem}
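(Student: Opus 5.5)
We prove the four properties of $h$ separately, using the representation formula of Lemma~\ref{lem:representation_formula} and the boundedness of $T$ and $T^{-1}$ from Lemma~\ref{lem:T_bounded}.

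\emph{Continuity.} Fix $y_0\in Y$ and choose $e\in\posC{X}$ with $e=1$ on an open neighborhood $W$ of $\tau(y_0)$; local compactness and Urysohn's lemma provide such an $e$. Since $\tau$ is continuous by Lemma~\ref{lem:tau_continuous}, the set $V=\tau^{-1}(W)$ is an open neighborhood of $y_0$. For each $y\in V$ the point $\tau(y)$ lies in the open set $W$, on which $e=1$, so Definition~\ref{def:weight_function} gives $h(y)=Te(y)$. Thus $h$ agrees with the continuous function $Te$ on $V$, and $h$ is continuous at $y_0$.

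\emph{Upper bound.} Let $M$ be the constant of Lemma~\ref{lem:T_bounded}, so that $\|Tf\|\le M\|f\|$. For $y\in Y$, choose $e$ as above with $0\le e\le 1$, which Urysohn's lemma allows. Then
\[
0\le h(y)=Te(y)\le \|Te\|\le M\|e\|\le M.
\]

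\emph{Positivity and lower bound.} This is the main step, and we handle it with $T^{-1}$. By Lemma~\ref{lem:homeo} and its proof, $T^{-1}$ has the same structure: it has point map $\sigma=\tau^{-1}$ and a weight $k\colon X\to[0,\infty)$ with
\[
T^{-1}v(x)=k(x)\,v(\sigma(x))
\]
by the analogue of Lemma~\ref{lem:representation_formula}. By the upper-bound argument applied to $T^{-1}$, we have $k\le M'$, where $M'$ is the bound for $T^{-1}$ from Lemma~\ref{lem:T_bounded}. Now fix $y\in Y$, put $x=\tau(y)$, and take $f\in\posC{X}$ with $f(x)=1$. Since $f=T^{-1}(Tf)$,
\[
1=f(x)=k(x)\,Tf(\sigma(x))=k(x)\,Tf(y)=k(x)\,h(y)f(x)=k(x)h(y).
\]
Hence $h(y)=1/k(\tau(y))\ge 1/M'>0$. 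This gives strict positivity and the bound away from zero together.

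The only point needing care is that the representation for $T^{-1}$ uses exactly the map $\sigma=\tau^{-1}$. This holds because $\sigma$ was constructed in the proof of Lemma~\ref{lem:homeo} by the same recipe applied to $T^{-1}$, and all lemmas of this section apply verbatim to $T^{-1}$ by symmetry.
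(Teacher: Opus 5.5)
Your proof is correct and follows essentially the paper's route: the upper bound comes from the boundedness of $T$, and the lower bound comes from representing $T^{-1}$ with a weight bounded by the constant for $T^{-1}$ and using the reciprocal identity $h(y)\,k(\tau(y))=1$ (you compose as $T^{-1}T$ and evaluate at $\tau(y)$, the paper composes as $TT^{-1}$ and evaluates at $y$). The differences are minor: you get continuity by observing that $h=Te$ on the whole neighborhood $\tau^{-1}(W)$, where the paper writes $h=Tf/(f\circ\tau)$ locally, and you obtain strict positivity from the reciprocal identity rather than from the paper's separate surjectivity argument.
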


\begin{proof}
We first show that $h(y)>0$ for all $y\in Y$. 
Choose $v\in\posC{Y}$ with $v(y)>0$. By surjectivity of $T$, there exists $f\in\posC{X}$ such that $Tf=v$. By Lemma~\ref{lem:representation_formula},
\[
0<v(y)=Tf(y)=h(y)f(\tau(y)),
\]
and hence $h(y)>0$.

Next we prove continuity. Let $y_0\in Y$. Choose $f\in\posC{X}$ such that
$f(\tau(y_0))>0$. Since $f\circ\tau$ is continuous, there is a neighborhood
$W$ of $y_0$ such that $f(\tau(y))>0$ for all $y\in W$.
By Lemma~\ref{lem:representation_formula}, we have
\[
h(y)=\frac{Tf(y)}{f(\tau(y))}
\]
for all $y\in W$.
Thus $h$ is continuous at $y_0$.

By Lemma~\ref{lem:T_bounded}, there exists $M>0$ such that
\[
\|Tf\|\le M\|f\|
\qquad (f\in\posC{X}).
\]
For each $y\in Y$, choose $u\in\posC{X}$ such that $u(\tau(y))=1$ and $\|u\|\le1$. Then
\[
h(y)=h(y)u(\tau(y))=Tu(y)\le \|Tu\|\le M.
\]
Hence $h$ is bounded above.

Applying the preceding construction to $T^{-1}$, we obtain a continuous function
$w\colon X\to(0,\infty)$ and a constant $C>0$ such that
\[
T^{-1}v(x)=w(x)v(\tau^{-1}(x))
\qquad (v\in\posC{Y},\ x\in X)
\]
and $w(x)\le C$ for all $x\in X$. From $T(T^{-1}v)=v$, we get
\[
h(y)w(\tau(y))v(y)=
h(y)(T^{-1}v)(\tau(y))=
v(y)
\]
for all $v\in\posC{Y}$ and $y\in Y$. Choosing $v$ with $v(y)>0$, we obtain
\[
h(y)w(\tau(y))=1.
\]
Thus
\[
\frac1{h(y)}=w(\tau(y))\le C,
\]
so $h(y)\ge 1/C$ for all $y\in Y$.
\end{proof}

\begin{proof}[\textbf{Proof of Main Theorem}]
The necessity is immediate
from Lemmas~\ref{lem:homeo}, \ref{lem:representation_formula}, and \ref{lem:weight_properties}.

Conversely, suppose that
\[
Tf(y)=h(y)f(\tau(y))
\]
for a homeomorphism $\tau\colon Y\to X$ and a continuous function
$h\colon Y\to(0,\infty)$ which is bounded and bounded away from zero.

For $f\in\posC{X}$, the function $f\circ\tau$ belongs to $C_0(Y)$, and since $h$ is bounded and continuous, we have
\[
Tf=h\cdot(f\circ\tau)\in\posC{Y}.
\]
Moreover,
\[
T(f+g)(y)=h(y)(f+g)(\tau(y))
=Tf(y)+Tg(y),
\]
so in particular
\[
\|T(f+g)\|=\|Tf+Tg\|.
\]

Define $S\colon\posC{Y}\to\posC{X}$ by
\[
Sv(x)=\frac{1}{h(\tau^{-1}(x))}v(\tau^{-1}(x)).
\]
Since $h$ is bounded away from zero, $Sv\in\posC{X}$ for all $v\in\posC{Y}$.
A direct computation gives
\[
S(Tf)=f,\qquad T(Sv)=v.
\]
Therefore, $T$ is a bijection from $\posC{X}$ onto $\posC{Y}$ satisfying the norm additive condition.
\end{proof}

\end{document}